\pgfplotsset{compat=1.7}
\newtheorem{proposition}{\bf Proposition}[section]
\newtheorem{theorem}{\bf Theorem}[section]
\newtheorem{remark}{\bf Remark}[section]
\newtheorem{lemma}{\bf Lemma}[section]
\newcommand\floor[1]{\left\lfloor#1\right\rfloor}
\begin{document}
\title{Approximation Results for Sums of Independent Random Variables}
\author[ ]{Pratima Eknath Kadu}
\affil[ ]{\small Department of Maths \& Stats,}
\affil[ ]{\small K. J. Somaiya College of Arts and Commerce,}
\affil[ ]{\small Vidyavihar, Mumbai-400077.}
\affil[ ]{\small Email: pratima.kadu@somaiya.edu}
\date{}
\maketitle

\begin{abstract}
\noindent
In this article, we consider Poisson and Poisson convoluted geometric approximation to the sums of $n$ independent random variables under moment conditions. We use Stein's method to derive the approximation results in total variation distance. The error bounds obtained are either comparable to or improvement over the existing bounds available in the literature. Also, we give an application to the waiting time distribution of 2-runs.
\end{abstract}

\noindent
\begin{keywords}
Poisson and geometric distribution; perturbations; probability generating function; Stein operator; Stein's method.
\end{keywords}\\
{\bf MSC 2010 Subject Classification:} Primary: 62E17, 62E20; Secondary: 60E05, 60F05.

\section{Introduction}
Let $\xi_1,\xi_2,\dotsc,\xi_n$ be $n$ independent random variables concentrated on $\mathbb{Z}_{+}=\{0,1,2,\ldots\}$ and 
\begin{equation}
W_n:=\sum_{i=1}^{n}\xi_i, \label{W}
\end{equation}
their convolution of $n$ independent random variables. The distribution of $W_n$ has received special attention in the literature due to its applicability in many settings such as rare events, the waiting time distributions, wireless communications, counts in nuclear decay, and business situations, among many others. For large values of $n$, it is in practice hard to obtain the exact distribution of $W_n$ in general, in fact, it becomes intractable if the underlying distribution is complicated such as hyper-geometric and logarithmic series distribution, among many others. It is therefore of interest to approximate the distribution of such $W_n$ with some well-known and easy to use distributions. Approximations to $W_n$ have been studied by several authors such as, saddle point approximation (Lugannani and Rice (1980) and Murakami (2015)), compound Poisson approximation (Barbour {\em et al.} (1992a), Serfozo (1986), and Roos (2003)), Poisson approximation (Barbour {\em et al.} (1992b)), the centred Poisson approximation (\v{C}ekanavi\v{c}ius and Vaitkus (2001)), compound negative binomial approximation (Vellaisamy and Upadhye (2009)), and negative binomial approximation (Vellaisamy {\em et al.} (2013) and Kumar and Upadhye (2017)).\\
In this article, we consider Poisson and Poisson convoluted geometric approximation to $W_n$. Let $X$ and $Y$ follow Poisson and geometric distribution with parameter $\lambda$ and $p=1-q$ with probability mass function (PMF)
\begin{align}
P(X=k)=\frac{e^{-\lambda} \lambda^k}{k!}\quad\text{and}\quad P(Y=k)=q^k p, \quad k=0,1,2,\dotsc,\label{ccd}
\end{align}
respectively.  Also, assume $X$ and $Y$ are independent. We use Stein's method to obtain bounds for the approximation of the law of $W_n$ with that of $X$ and $X+Y$. Stein's method (Stein (1986)) requires identification of a Stein operator and there are several approaches to obtain Stein operators (see Reinert (2005)) such as density approach (Stein (1986), Stein {\em et al.} (2004), Ley and Swan (2013a, 2013b)), generator approach (Barbour (1990) and G\"{o}tze (1991)), orthogonal polynomial approach (Diaconis and Zabell (1991)), and probability generating function (PGF) approach (Upadhye {\em et al.} (2014)). We use the PGF approach to obtain Stein operators.\\
This article is organized as follows. In Section \ref{sectwo}, we introduce some notations to simplify the presentation of the article. Also, we discuss some known results of Stein's method. In Section \ref{secthree}, Stein operators for $W_n$ and $X+Y$ are obtained as a perturbation of the Poisson operator. In Section \ref{secfour}, the error bounds for $X$ and $X+Y$ approximation to $W_n$ are derived in total variation distance. In Section \ref{tsec}, we demonstrate the relevance of our results through an application to the waiting time distribution of 2-runs. In Section \ref{psec}, we point out some relevant remarks.

\section{Notations and Preliminaries}\label{sectwo}
Recall that $W_n=\sum_{i=1}^{n}\xi_{i}$, where $\xi_{1},\xi_{2},\dots,\xi_{n}$ are $n$ independent random variables concentrated on $\mathbb{Z}_+$. Throughout, we assume that $\psi_{\xi_i}$, the PGF of $\xi_i$, satisfies
\begin{equation}
\frac{\psi'_{\xi_i}(w)}{\psi_{\xi_i}(w)}=\sum_{j=0}^{\infty}g_{i,j+1}w^j=:\phi_{\xi_i}(w), \label{pgf-xi}
\end{equation}
at all $w \in \mathbb{Z}_{+}$. Note that this assumption is satisfied for the series \eqref{pgf-xi} converges absolutely. Also, one can show that the hyper-geometric and logarithmic series distribution do not satisfy \eqref{pgf-xi}. See Yakshyavichus (1998), and Kumar and Upadhye (2017) for more details. Note that
\begin{enumerate}
\item If $\xi_i \sim Po(\lambda_i) \implies g_{i,j+1} = \left\{
\begin{array}{ll}
\lambda_i,  & \text{for $j=0$},\\
0, & \text{for $j\ge 1$}.
\end{array}\right.$
\item If $\xi_i \sim Ge(p_i) \implies g_{i,j+1} = q_i^{j+1}$.
\item If $\xi_i \sim Bi(n,p_i) \implies g_{i,j+1} = n (-1)^j \left(p_i/(1-p_i)\right)^{j+1}$.
\end{enumerate}
Next, let $\mu$ and ${\sigma}^{2}$ be the mean and variance of $W_n$, respectively. Also, let $\mu_2$ and $\mu_3$ denote the second and third factorial cumulant moments of $W_n$, respectively. Then, it can be easily verified that
\begin{align}
\mu&=\sum_{i=1}^{n} \phi_{\xi_i}(1)=\sum_{i=1}^{n}\sum_{j=0}^{\infty} g_{i,j+1}, \
{\sigma}^2=\sum_{i=1}^{n}\Big[ \phi_{\xi_i}(1)+\phi'_{\xi_i}(1) \Big]=\sum_{i=1}^{n}\sum_{j=0}^{\infty}(j+1)g_{i,j+1},\label{muu} \\
\mu_2&=\sum_{i=1}^{n} \phi'_{\xi_i}(1)=\sum_{i=1}^{n}\sum_{j=0}^{\infty} jg_{i,j+1}, \
{\rm and}\
\mu_3=\sum_{i=1}^{n} \phi''_{\xi_i}(1)=\sum_{i=1}^{n}\sum_{j=0}^{\infty} j(j-1)g_{i,j+1}.\nonumber
\end{align}
For more details, see Vellaisamy {\em et al.} (2013), and Kumar and Upadhye (2017).\\
Next, let $H:=\{f|f:\mathbb{Z}_{+}\rightarrow \mathbb{R} \text{ is bounded}\}$ and
\begin{equation}
H_{\bar X}:=\{h \in H|h(0)=0, ~\text{and}~ h(j)=0 ~\text{for}~ j \notin Supp({\bar X})\}\label{Hw}
\end{equation}
for a random variable ${\bar X}$ and $Supp({\bar X})$ denotes the support of random variable ${\bar X}$.\\ 
Now, we discuss Stein's method which can be carried out in the following three steps. \\
We  first identify a suitable operator $\mathscr{A}_{\bar X}$ for a random variable $\bar{X}$ (known as Stein operator) such that 
$$\mathbb{E}(\mathscr{A}_{\bar X}h({\bar X}))=0, \quad \text{for $h \in H$.}$$
In the second step, we find a solution to the Stein equation
\begin{equation}
\mathscr{A}_{\bar X}h(j)=f(j)-\mathbb{E}f({\bar X}),~ j \in \mathbb{Z}_+ ~ {\rm and} ~ f \in H_{\bar{X}} \label{st eq}
\end{equation}
and obtain the bound for $\|\Delta h\|$, where $\|\Delta h\|= \sup_{j \in \mathbb{Z}_+}|\Delta h(j)|$ and $\Delta h(j)=h(j+1)-h(j)$ denotes the first forward difference operator.\\
Finally, substitute a random variable ${\bar Y}$ for $j$ in \eqref{st eq} and taking expectation and supremum, the expression leads to
\begin{equation}
d_{TV}({\bar X},{\bar Y}):=\sup_{f \in \mathcal{H}} \big| \mathbb{E}f({\bar X})- \mathbb{E}f({\bar Y}) \big| =\sup_{f \in \mathcal{H}} \big| \mathbb{E}[\mathscr{A}_{\bar X}h({\bar Y})] \big|, \label{dist-xy}
\end{equation}
where $\mathcal{H}=\{{\bf 1}_A~|~A\subseteq \mathbb{Z}_+\}$ and ${\bf 1}_A$ is the indicator function of $A$. Equivalently, \eqref{dist-xy} can be represented as
\begin{align*}
d_{TV}({\bar X},{\bar Y})=\frac{1}{2}\sum_{j=0}^{\infty} |P({\bar X}=j)-P({\bar Y}=j)|.
\end{align*}
For more details, we refer the reader to Barbour {\em et. al.} (1992b), Chen {\em et. al.} (2011), Goldstein and Reinert (2005), and Ross (2011). For recent developments, see Barbour and Chen (2014),  Ley {\em et. al.} (2014), Upadhye {\em et. al.} (2014), and references therein.\\
Next, it is known that a Stein operator for $X \sim Po(\lambda)$, the Poisson random variable with parameter $\lambda$, is given by
\begin{equation}
\mathscr{A}_Xh(j)=\lambda h(j+1)-jh(j), \quad {\rm for}~ j \in \mathbb{Z}_+ ~{\rm and}~ h \in H. \label{st-Pox}
\end{equation}
Also, from Section $5$ of Barbour and Eagleson (1983), the bound for the solution to the stein equation (say $h_f$) is given by
\begin{equation}
\|\Delta h_f\| \leq  \frac{1}{\max(1,\lambda)},\quad \text{for $f\in \mathcal{H}$, $h\in H$}. \label{bound-Pox}
\end{equation}
In terms of $\|f\|$, we have the following bound
\begin{equation}
\|\Delta h_f\| \leq  \frac{2\|f\|}{\max(1,\lambda)},\quad \text{for $f\in \mathcal{H}$, $h\in H$}. \label{bound-Pox1}
\end{equation}
See Section 3 of Upadhye {\em et al.} (2014) for more details. Note that the condition $h(0)=0$ in \eqref{Hw} is used while obtaining the bound \eqref{bound-Pox}, see Barbour and Eagleson (1983) for more details.\\
Next, suppose we have three random variables $X_1$, $X_2$, and $X_3$ defined on some common probability space. Define $\mathscr{U}=\mathscr{A}_{X_2}-\mathscr{A}_{X_1}$ then the upper bound for $d_{TV}(X_2,X_3)$ can be obtained by the following lemma  which is given by Upadhye {\em et al.} (2014).

\begin{lemma}\label{perturbation} [Lemma $3.1$, Upadhye et al. (2014)]
Let $X_1$ be a random variable with support ${\cal S}$, Stein operator $ \mathscr{A}_{X_1}$, and $h_0$ be the solution to Stein equation \eqref{st eq} satisfying 
$$\|\Delta h_0\|\le w_1 \|f\| \min(1,\alpha^{-1}),$$
where $w_1,~\alpha>0$. Also, let $X_2$ be a random variable whose Stein operator can be written as $ \mathscr{A}_{X_2}=\mathscr{A}_{X_1}+U_1$ and $X_3$ be a random variable such that, for $h\in H_{X_1}\cap H_{X_2}$, 
$$\|U_1h\|\le w_2\|\Delta h\|\quad \text{and}\quad|{\mathbb E}\mathscr{A}_{X_2}h(X_3)|\le \varepsilon \|\Delta h\|,$$
where $w_1w_2<\alpha$. Then
$$d_{TV}(X_2,X_3)\le \frac{\alpha}{2(\alpha-w_1 w_2)}\left(\varepsilon w_1 \min(1,\alpha^{-1})+2 P(X_2\in {\cal S}^c)+2 P(X_3\in {\cal S}^c)\right),$$
where ${\cal S}^c$ denote the complement of set ${\cal S}$.
\end{lemma}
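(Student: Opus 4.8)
The plan is to bound the total variation distance by starting from the Stein equation characterization in \eqref{dist-xy} and carefully tracking how the perturbation term $U_1$ enters. First I would write, for $f \in \mathcal{H}$ and the corresponding solution $h$ of the Stein equation for $X_1$, the identity $\mathscr{A}_{X_2}h = \mathscr{A}_{X_1}h + U_1 h$, so that substituting $X_3$ and taking expectations gives
\begin{equation*}
\mathbb{E}\mathscr{A}_{X_1}h(X_3) = \mathbb{E}\mathscr{A}_{X_2}h(X_3) - \mathbb{E}U_1h(X_3).
\end{equation*}
The point is that $\mathbb{E}f(X_2) - \mathbb{E}f(X_3)$ can be expressed through $\mathbb{E}\mathscr{A}_{X_2}h(X_3)$ when $h$ solves the $X_2$-Stein equation, but since $h$ here solves the $X_1$-equation I would instead relate the two distances $d_{TV}(X_2,X_3)$ and $d_{TV}(X_1,X_3)$ and then solve for the former.

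Next I would handle the support issue, which is the source of the $P(X_2 \in \mathcal{S}^c)$ and $P(X_3 \in \mathcal{S}^c)$ terms. Because the solution $h$ is taken in $H_{X_1}$ (so $h$ vanishes off $\mathcal{S}=Supp(X_1)$ and at $0$ as required in \eqref{Hw}), the Stein equation $\mathscr{A}_{X_1}h(j) = f(j) - \mathbb{E}f(X_1)$ only holds cleanly on $\mathcal{S}$; off the support one picks up error terms controlled by $\|f\|$ times the probability that $X_2$ or $X_3$ escapes $\mathcal{S}$. I would make this precise by splitting the relevant expectations into the part on $\mathcal{S}$ and the part on $\mathcal{S}^c$, bounding each off-support contribution by $2\|f\| P(\,\cdot\in\mathcal{S}^c)$, using $\|f\|\le 1$ for indicator test functions.

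Then I would assemble the estimates. Using the hypotheses $\|\Delta h_0\| \le w_1\|f\|\min(1,\alpha^{-1})$, $\|U_1 h\| \le w_2 \|\Delta h\|$, and $|\mathbb{E}\mathscr{A}_{X_2}h(X_3)| \le \varepsilon\|\Delta h\|$, I would bound
\begin{equation*}
|\mathbb{E}U_1 h(X_3)| \le \|U_1 h\| \le w_2\|\Delta h\| \le w_1 w_2 \|f\|\min(1,\alpha^{-1}),
\end{equation*}
which is where the product $w_1 w_2$ appears. Feeding this together with the $\varepsilon$-term and the two support terms into the supremum over $f \in \mathcal{H}$ yields an inequality of the schematic form
\begin{equation*}
d_{TV}(X_2,X_3) \le \tfrac{1}{2}\big(\varepsilon w_1 \min(1,\alpha^{-1}) + 2P(X_2\in\mathcal{S}^c) + 2P(X_3\in\mathcal{S}^c)\big) + \tfrac{w_1 w_2}{\alpha} d_{TV}(X_2,X_3),
\end{equation*}
where the self-referential term arises because the perturbation expectation $\mathbb{E}U_1 h(X_3)$ must itself be re-expressed back in terms of $d_{TV}(X_2,X_3)$ rather than the raw $\|f\|$ bound.

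The main obstacle, and the step I would treat most carefully, is this last self-referential estimate: one has to route the perturbation term through the distance $d_{TV}(X_2,X_3)$ so that collecting it on the left-hand side produces the factor $(1 - w_1 w_2/\alpha)^{-1} = \alpha/(\alpha - w_1 w_2)$, which is exactly why the condition $w_1 w_2 < \alpha$ is imposed (it keeps the coefficient positive and the rearrangement valid). Once the inequality is rearranged and both sides multiplied by $\alpha/(\alpha - w_1 w_2)$, absorbing the $\tfrac12$ into the bound gives precisely the claimed estimate. The bookkeeping of which Stein equation each occurrence of $h$ solves, and ensuring $h$ stays in $H_{X_1}\cap H_{X_2}$ so that $\|U_1 h\|\le w_2\|\Delta h\|$ is applicable, is the delicate part; the remaining manipulations are routine.
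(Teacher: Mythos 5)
First, note that the paper offers no proof of this statement: Lemma \ref{perturbation} is imported verbatim from Lemma 3.1 of Upadhye \emph{et al.}\ (2014), so your proposal can only be judged against the argument in that reference. Measured against it, your sketch has a genuine gap at exactly the step you yourself flag as the delicate one. You work throughout with the solution $h_0$ of the \emph{unperturbed} Stein equation $\mathscr{A}_{X_1}h_0=f-\mathbb{E}f(X_1)$, and you claim that the factor $\alpha/(\alpha-w_1w_2)$ will emerge by ``re-expressing $\mathbb{E}U_1h(X_3)$ back in terms of $d_{TV}(X_2,X_3)$'' and rearranging a self-referential inequality. But no mechanism for that re-expression exists: the hypotheses only give $|\mathbb{E}U_1h_0(X_3)|\le\|U_1h_0\|\le w_2\|\Delta h_0\|\le w_1w_2\|f\|\min(1,\alpha^{-1})$, which is a bound in terms of $\|f\|$, not in terms of $d_{TV}(X_2,X_3)$, and there is no inequality in the setup that converts one into the other. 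If you instead route through the triangle inequality $d_{TV}(X_2,X_3)\le d_{TV}(X_1,X_2)+d_{TV}(X_1,X_3)$, you end up with an irreducible \emph{additive} term of order $w_1w_2\min(1,\alpha^{-1})$, which is not present in the claimed bound (there $w_1w_2$ appears only in the denominator). So the inequality you display, with $\tfrac{w_1w_2}{\alpha}\,d_{TV}(X_2,X_3)$ on the right-hand side, is asserted rather than derived, and it is precisely the content of the lemma.

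The actual proof places the self-reference on a different quantity: one constructs a solution $h$ of the \emph{perturbed} equation $\mathscr{A}_{X_2}h=f-c$ (on $\mathcal{S}$, for a suitable constant $c$) by iteration, setting $h=\sum_{k\ge0}h_k$ where $h_0$ solves the $X_1$-equation for $f$ and each $h_{k+1}$ solves the $X_1$-equation with test function $-U_1h_k$ (centred). The hypotheses give $\|\Delta h_{k+1}\|\le w_1\min(1,\alpha^{-1})\|U_1h_k\|\le w_1w_2\min(1,\alpha^{-1})\|\Delta h_k\|$, so the series converges geometrically precisely when $w_1w_2<\alpha$ and yields $\|\Delta h\|\le w_1\min(1,\alpha^{-1})\|f\|\cdot\alpha/(\alpha-w_1w_2)$; equivalently, the fixed-point bound $\|\Delta h\|\le w_1\min(1,\alpha^{-1})\bigl(\|f\|+\|U_1h\|\bigr)$ is what gets rearranged, not a bound on $d_{TV}(X_2,X_3)$. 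Only then does one apply $|\mathbb{E}\mathscr{A}_{X_2}h(X_3)|\le\varepsilon\|\Delta h\|$ and account for the off-support contributions (your treatment of the $2P(X_2\in\mathcal{S}^c)+2P(X_3\in\mathcal{S}^c)$ terms is the right idea). Without the iteration step --- or some equivalent construction of the $X_2$-solution --- your argument cannot produce the denominator $\alpha-w_1w_2$, so the proof as proposed does not go through.
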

\noindent
Finally, from Corollary 1.6 of Mattner and Roos (2007), we have
\begin{equation}
d_{TV}(W_n,W_n+1) \leq \sqrt{\frac{2}{\pi}} \Bigg(\frac{1}{4}+\sum_{i=1}^{n} \big(1-d_{TV}(\xi_{i},\xi_{i}+1)\big)\Bigg)^{-1/2}.\label{dist-w,w+1}
\end{equation}
For more details about these results, we refer the reader to Barbour {\em et al.} (2007), Upadhye {\em et al.} (2014), Vellaisamy {\em et al.} (2013), Kumar and Upadhye (2017), and references therein.

\section{Stein Operator for the Convolution of Random Variables} \label{secthree}
In this section, we derive Stein operators for $W_n$ and $X+Y$ as a perturbation of Poisson operator which are used to obtain the main results in Section \ref{secfour}.
\begin{proposition}\label{st-w1}
Let $\xi_1,\xi_2,\dotsc,\xi_n$ be independent random variables satisfying \eqref{pgf-xi} and $W_n=\sum_{i=1}^{n}\xi_i$. Then, a Stein operator for $W_n$ is
\begin{align*}
\mathscr{A}_{W_n}h(j)={\mu}h(j+1)-jh(j)+\sum_{i=1}^{n}\sum_{k=0}^{\infty}\sum_{l=1}^{k}g_{i,k+1} \Delta h(j+l),
\end{align*}
where $\mu$ is defined in \eqref{muu}.
\end{proposition}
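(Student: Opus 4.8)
The plan is to verify the defining Stein identity $\mathbb{E}[\mathscr{A}_{W_n}h(W_n)]=0$ for every $h\in H$, following the PGF approach of Upadhye \emph{et al.} (2014). The engine of the argument is a single consequence of independence: since $\psi_{W_n}(w)=\prod_{i=1}^{n}\psi_{\xi_i}(w)$, logarithmic differentiation together with \eqref{pgf-xi} gives
\[
\frac{\psi'_{W_n}(w)}{\psi_{W_n}(w)}=\sum_{i=1}^{n}\frac{\psi'_{\xi_i}(w)}{\psi_{\xi_i}(w)}=\sum_{i=1}^{n}\phi_{\xi_i}(w)=:\phi_{W_n}(w),
\]
i.e. $\psi'_{W_n}(w)=\psi_{W_n}(w)\,\phi_{W_n}(w)$, where $\phi_{W_n}(1)=\mu$ by \eqref{muu}. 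I would record this first, noting that the absolute convergence of the series in \eqref{pgf-xi} legitimizes the term-by-term manipulation.

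Next I would probe the proposed operator on the monomial test functions $h(j)=w^{j}$, $|w|\le 1$, which is exactly where the PGF approach enters. Using $\Delta w^{j}=(w-1)w^{j}$ and the geometric sum $\sum_{l=1}^{k}w^{l}=(w^{k+1}-w)/(w-1)$, the triple-sum perturbation telescopes to
\[
\sum_{i=1}^{n}\sum_{k=0}^{\infty}\sum_{l=1}^{k}g_{i,k+1}\,\Delta w^{j+l}=w^{j}\sum_{i=1}^{n}\sum_{k=0}^{\infty}g_{i,k+1}\big(w^{k+1}-w\big)=w^{j}\big(w\,\phi_{W_n}(w)-w\mu\big).
\]
Taking expectations and using $\mathbb{E}\,w^{W_n}=\psi_{W_n}(w)$ and $\mathbb{E}[W_n w^{W_n}]=w\,\psi'_{W_n}(w)$, the Poisson part \eqref{st-Pox} contributes $\mu w\,\psi_{W_n}(w)-w\,\psi'_{W_n}(w)$; adding the perturbation, the two terms $\pm\mu w\,\psi_{W_n}(w)$ cancel and one is left with
\[
\mathbb{E}\big[\mathscr{A}_{W_n}w^{W_n}\big]=w\big(\psi_{W_n}(w)\phi_{W_n}(w)-\psi'_{W_n}(w)\big)=0
\]
by the first step.

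It then remains to pass from monomials to an arbitrary $h\in H$. The cleanest route is to observe that the same telescoping, applied to a general $h$, collapses the operator to $\mathscr{A}_{W_n}h(j)=\sum_{k=0}^{\infty}\big(\sum_{i=1}^{n}g_{i,k+1}\big)h(j+k+1)-j\,h(j)$ (the $-\mu h(j+1)$ produced by the telescope cancels the $+\mu h(j+1)$ of the Poisson part); equating coefficients of $w^{j}$ in $\psi'_{W_n}=\psi_{W_n}\phi_{W_n}$ yields the recursion $(j+1)p_{j+1}=\sum_{k=0}^{j}\big(\sum_{i}g_{i,k+1}\big)p_{j-k}$ among the masses $p_{j}=P(W_n=j)$, and multiplying this recursion by $h(j+1)$ and summing over $j\ge 0$ gives $\mathbb{E}[\mathscr{A}_{W_n}h(W_n)]=0$. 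I expect the difficulty here to be bookkeeping rather than depth: the genuinely delicate points are the telescoping/geometric collapse of the triple sum and the justification of the interchange of the (infinite) summations, including the appeal to Fubini. All of these are controlled by the standing absolute-convergence hypothesis in \eqref{pgf-xi} together with the boundedness of $h$, so no hard estimate is required.
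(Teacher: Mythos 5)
Your proposal is correct and, in its decisive third paragraph, follows exactly the paper's route: equate coefficients of $w^{j}$ in $\psi'_{W_n}=\psi_{W_n}\phi_{W_n}$ to get the recursion $(j+1)\gamma_{j+1}=\sum_{k=0}^{j}\big(\sum_{i}g_{i,k+1}\big)\gamma_{j-k}$, multiply by $h(j+1)$ and sum to obtain $\mathbb{E}[\mathscr{A}_{W_n}h(W_n)]=0$ for the operator $\sum_{i}\sum_{k}g_{i,k+1}h(j+k+1)-jh(j)$, then apply the telescoping identity $h(j+k+1)=\sum_{l=1}^{k}\Delta h(j+l)+h(j+1)$ to reach the stated perturbation form. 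The monomial computation in your second paragraph is a harmless but logically redundant sanity check, since it only verifies the Stein identity for $h(j)=w^{j}$ and your general argument does not rely on it.
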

\begin{proof}
It can be easily verified that the PGF of $W_n$, denoted by $\psi_{W_n}$, is 
$$\psi_{W_n}(w)=\prod_{i=1}^{n}\psi_{\xi_i}(w)$$
as $\xi_1,\xi_2,\dotsc,\xi_n$ are independent random variables. Differentiating with respect to $w$, we have
\begin{align*}
\psi^{'}_{W_n}(w)&=\psi_{W_n}(w)\sum_{i=1}^{n} \phi_{\xi_i}(w)\\
&=\sum_{i=1}^{n}\psi_{W_n}(w)
\sum_{j=0}^{\infty}g_{i,j+1}w^j,
\end{align*}
where $\phi_{\xi_i}(\cdot)$ is defined in \eqref{pgf-xi}.
Using definition of the PGF, the above expression can be expressed as
\begin{align*}
\sum_{j=0}^{\infty}(j+1)\gamma_{j+1}w^{j}=\sum_{i=1}^{n}\sum_{k=0}^{\infty}\gamma_{k}w^{k}
\sum_{j=0}^{\infty}g_{i,j+1}w^{j}=\sum_{j=0}^{\infty}\left(\sum_{i=1}^{n}\sum_{k=0}^{j}\gamma_{k}g_{i,j-k+1}\right)w^{j},
\end{align*}
\noindent
where $\gamma_j=P(W_n=j)$.
Comparing the coefficients of $w^{j}$, we get
$$\sum_{i=1}^{n}\sum_{k=0}^{j}\gamma_{k}g_{i,j-k+1}-(j+1)\gamma_{j+1}=0.$$
Let $h \in H_{W_n}$ as defined in \eqref{Hw}, then
$$\sum_{j=0}^{\infty}h(j+1)\left[\sum_{i=1}^{n}\sum_{k=0}^{j}\gamma_{k}g_{i,j-k+1}-(j+1)\gamma_{j+1}\right]=0.$$
Therefore,
$$\sum_{j=0}^{\infty}\left[\sum_{i=1}^{n}\sum_{k=0}^{\infty}g_{i,k+1}h(j+k+1)-jh(j)\right] \gamma_j =0.$$
Hence, a Stein operator for $W_n$ is given by
\begin{equation}
\mathscr{A}_{W_n}h(j)=\sum_{i=1}^{n}\sum_{k=0}^{\infty}g_{i,k+1}h(j+k+1)-jh(j). \label{st-w}
\end{equation}
It is well known that 
\begin{equation}
h(j+k+1)=\sum_{l=1}^{k}\Delta h(j+l)+h(j+1). \label{grad}
\end{equation}
Using \eqref{grad} in \eqref{st-w}, the proof follows.
\end{proof}

\begin{proposition} \label{st-z}
Let $X\sim \text{Po}(\lambda)$ and $Y\sim \text{Ge}(p)$ as defined in \eqref{ccd}. Also, assume $X$ and $Y$ are independent random variables. Then a Stein operator for $X+Y$ is given by
\begin{align*}
\bar{\mathscr{A}}_{X+Y}h(j)=\Big(\lambda+\frac{q}{p}\Big)h(j+1)-jh(j)+\sum_{k=0}^{\infty}\sum_{l=1}^{k} q^{k+1} \Delta h(j+l).
\end{align*}
\end{proposition}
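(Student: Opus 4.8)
The plan is to mirror the derivation of Proposition \ref{st-w1} exactly, since $X+Y$ is itself a convolution of two independent $\mathbb{Z}_+$-valued random variables. First I would compute the logarithmic derivative of the joint PGF. Writing $\psi_{X+Y}(w)=\psi_X(w)\psi_Y(w)$ by independence, I would use the entries already tabulated in Section \ref{sectwo}: for $X\sim Po(\lambda)$ we have $\phi_X(w)=\lambda$ (i.e.\ $g_{1,1}=\lambda$, $g_{1,j+1}=0$ for $j\ge 1$), and for $Y\sim Ge(p)$ we have $\phi_Y(w)=\sum_{j=0}^\infty q^{j+1}w^j$ (i.e.\ $g_{2,j+1}=q^{j+1}$). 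Adding these, $\phi_X(w)+\phi_Y(w)=\lambda+\sum_{j=0}^\infty q^{j+1}w^j$, and the coefficient combination collapses to a single double sum over $k$ with weight $q^{k+1}$, plus the isolated Poisson term $\lambda$.

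Next I would substitute these specific $g$-values directly into the general Stein operator \eqref{st-w} from Proposition \ref{st-w1} with $n=2$. The Poisson contribution $\sum_{k=0}^\infty g_{1,k+1}h(j+k+1)$ keeps only $k=0$, giving $\lambda h(j+1)$; the geometric contribution gives $\sum_{k=0}^\infty q^{k+1}h(j+k+1)$. I would then handle the leading coefficient of $h(j+1)$ carefully: after applying the telescoping identity \eqref{grad}, $h(j+k+1)=\sum_{l=1}^{k}\Delta h(j+l)+h(j+1)$, the geometric sum contributes $\Big(\sum_{k=0}^\infty q^{k+1}\Big)h(j+1)=\frac{q}{p}\,h(j+1)$ from the $h(j+1)$ pieces, while the Poisson term contributes $\lambda h(j+1)$. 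Combining gives the coefficient $\lambda+\frac{q}{p}$ on $h(j+1)$, leaving the double-difference remainder $\sum_{k=0}^\infty\sum_{l=1}^{k}q^{k+1}\Delta h(j+l)$, which is exactly the claimed form.

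The main obstacle, such as it is, is bookkeeping rather than conceptual: I must justify interchanging the order of summation when regrouping $\sum_{k}q^{k+1}\sum_{l=1}^{k}\Delta h(j+l)$ and extracting the $h(j+1)$ terms, and confirm that the geometric series $\sum_{k=0}^\infty q^{k+1}=q/p$ converges (which holds since $0<q<1$). Boundedness of $h$ together with $\sum_k q^{k+1}<\infty$ guarantees absolute convergence, so Fubini-type rearrangement is legitimate and no subtlety arises at the $k=0$ boundary (where the inner sum $\sum_{l=1}^{0}$ is empty). The bar notation $\bar{\mathscr{A}}_{X+Y}$ presumably signals that this is one particular representation of the operator as a perturbation of the Poisson operator $\lambda h(j+1)-jh(j)$ shifted by the geometric mean $q/p$; I would note that the perturbation term vanishes identically when $q=0$ (so $Y\equiv 0$), recovering the pure Poisson operator \eqref{st-Pox} as a consistency check.
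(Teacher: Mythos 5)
Your proof is correct, but it is organized differently from the paper's. The paper proves Proposition \ref{st-z} from scratch: it writes $\psi_Z(w)=\psi_X(w)\psi_Y(w)$, computes $\psi_Z'(w)=\bigl(\lambda+q\sum_{j\ge 0}q^jw^j\bigr)\psi_Z(w)$, extracts the recurrence $(j+1)\bar\gamma_{j+1}=\lambda\bar\gamma_j+\sum_{k=0}^{j}\bar\gamma_k q^{j-k+1}$ for the PMF of $Z$, sums against $h(j+1)$, and only then applies the telescoping identity \eqref{grad} --- i.e.\ it repeats for $Z=X+Y$ the same PGF argument used in Proposition \ref{st-w1}. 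You instead observe that $X+Y$ is itself a convolution of two independent $\mathbb{Z}_+$-valued variables satisfying \eqref{pgf-xi}, so Proposition \ref{st-w1} (in the intermediate form \eqref{st-w}) applies with $n=2$, $g_{1,1}=\lambda$, $g_{1,k+1}=0$ for $k\ge 1$, and $g_{2,k+1}=q^{k+1}$; the rest is the substitution $\sum_k q^{k+1}=q/p$ and the same application of \eqref{grad}. Both routes hinge on the identical computation of the logarithmic derivative of the PGF, so nothing conceptually new is gained either way, but your version avoids duplicating the coefficient-matching argument and makes explicit that Proposition \ref{st-z} is a special case of Proposition \ref{st-w1}, whereas the paper's self-contained derivation displays the recurrence for $P(Z=j)$ explicitly (which is mildly informative in its own right). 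Your attention to the convergence of $\sum_k q^{k+1}$ and of $\sum_k k\,q^{k+1}$ when regrouping terms, and the $q=0$ consistency check, are both sound.
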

\begin{proof}
It is known that the PGF of $X$ and $Y$ are 
$$\psi_X(w)=e^{-\lambda(1-w)}\quad\text{and}\quad \psi_Y(w)=\frac{p}{1-qw},$$ respectively. Then, the PGF of $Z=X+Y$ is given by
$$\psi_{Z}(w)=\psi_X(w).\psi_Y(w).$$
Differentiating with respect to $w$, we get
\begin{align*}
\psi'_{Z}(w)=\Big(\lambda+\frac{q}{1-qw}\Big)\psi_{Z}(w)=\Big(\lambda+q\sum_{j=0}^{\infty}q^jw^j\Big)\psi_{Z}(w),\quad |w|<q^{-1}.
\end{align*}
Let $\bar{\gamma}_{j}=P(Z=j)$ be the PMF of $Z$. Then, using definition of the PGF, we have
$$\sum_{j=0}^{\infty}(j+1) \bar{\gamma}_{j+1}w^{j}=\lambda\sum_{j=0}^{\infty} \bar{\gamma}_{j}w^{j}+\sum_{j=0}^{\infty}q^{j+1}w^{j}\sum_{k=0}^{\infty} \bar{\gamma}_{k}w^{k}.$$
This implies
$$\sum_{j=0}^{\infty}(j+1) \bar{\gamma}_{j+1}w^{j}-\lambda\sum_{j=0}^{\infty} \bar{\gamma}_{j}w^{j}-\sum_{j=0}^{\infty}\left(\sum_{k=0}^{j} \bar{\gamma}_{k}q^{j-k+1}\right)w^{j}=0.$$
Collecting the coefficients of $w^j$, we get
$$(j+1) \bar{\gamma}_{j+1}-\lambda \bar{\gamma}_{j}-\sum_{k=0}^{j} \bar{\gamma}_{k}q^{j-k+1}=0.$$
Let $h \in H_{Z}$ as defined in \eqref{Hw}, then
$$\sum_{j=0}^{\infty}h(j+1)\Big[\lambda \bar{\gamma}_{j}-(j+1) \bar{\gamma}_{j+1}+\sum_{k=0}^{j} \bar{\gamma}_{k}q^{j-k+1}\Big]=0.$$
Further simplification leads to
$$\sum_{j=0}^{\infty}\Big[\lambda h(j+1)-jh(j)+\sum_{k=0}^{\infty}q^{k+1}h(j+k+1)\Big] \bar{\gamma}_{j}=0.$$
Therefore,
$$\bar{\mathscr{A}}_{X+Y}h(j)=\lambda h(j+1)-jh(j)+\sum_{k=0}^{\infty}q^{k+1}h(j+k+1).$$
Using \eqref{grad}, the proof follows.
\end{proof}

\section{Approximation Results} \label{secfour}
In this section, we derive an error bound for the Poisson and Poisson convoluted geometric approximation to $W_n$.  The following theorem gives the bound for Poisson, with parameter $\mu$, approximation.

\begin{theorem}\label{bound-po}
Let $\xi_1,\xi_2,\dotsc,\xi_n$ be independent random variables satisfying \eqref{pgf-xi} and $W_n=\sum_{i=1}^{n}\xi_i$. Then
\begin{align*}
d_{TV}(W_n,X) \le \frac{\left| \mu_2\right| }{\max (1,\mu)},
\end{align*}
where $X \sim Po(\mu)$.
\end{theorem}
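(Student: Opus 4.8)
The plan is to realize $\mathscr{A}_{W_n}$ as a perturbation of the Poisson operator $\mathscr{A}_X$ with $X \sim Po(\mu)$, and then to bound the perturbation term \emph{directly} rather than by invoking Lemma \ref{perturbation}; the latter would introduce the extra factor $\alpha/(\alpha - w_1w_2)$ and hence a strictly weaker constant. Comparing Proposition \ref{st-w1} with the Poisson operator \eqref{st-Pox} (taken at $\lambda = \mu$), I would write $\mathscr{A}_{W_n} = \mathscr{A}_X + U_1$, where
\[
U_1 h(j) = \sum_{i=1}^{n}\sum_{k=0}^{\infty}\sum_{l=1}^{k} g_{i,k+1}\,\Delta h(j+l).
\]
The crucial feature, inherited from \eqref{grad}, is that $U_1$ already acts through first differences $\Delta h$, which is exactly what will let the sharp constant $1/\max(1,\mu)$ of \eqref{bound-Pox} (rather than the twofold larger constant of \eqref{bound-Pox1}) enter the final bound.

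First I would use the representation \eqref{dist-xy}: for each indicator $f \in \mathcal{H}$, let $h_f$ solve the Poisson Stein equation $\mathscr{A}_X h_f = f - \mathbb{E}f(X)$, so that evaluating at $W_n$ and taking expectations gives $\mathbb{E}f(W_n) - \mathbb{E}f(X) = \mathbb{E}[\mathscr{A}_X h_f(W_n)]$. Since $\mathscr{A}_{W_n}$ is a Stein operator for $W_n$, one has $\mathbb{E}[\mathscr{A}_{W_n} h_f(W_n)] = 0$, and subtracting yields
\[
\mathbb{E}f(W_n) - \mathbb{E}f(X) = \mathbb{E}[(\mathscr{A}_X - \mathscr{A}_{W_n})h_f(W_n)] = -\,\mathbb{E}[U_1 h_f(W_n)].
\]
Thus the whole problem collapses to bounding $|\mathbb{E}[U_1 h_f(W_n)]|$ uniformly over $f \in \mathcal{H}$.

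Next I would estimate the remainder. Bounding each differenced term pointwise by $|\Delta h_f(m)| \le \|\Delta h_f\|$ and collapsing the inner sum via $\sum_{l=1}^{k} 1 = k$ gives
\[
|\mathbb{E}[U_1 h_f(W_n)]| \le \|\Delta h_f\| \sum_{i=1}^{n}\sum_{k=0}^{\infty}\sum_{l=1}^{k} g_{i,k+1} = \|\Delta h_f\| \sum_{i=1}^{n}\sum_{k=0}^{\infty} k\,g_{i,k+1} = \|\Delta h_f\|\,\mu_2,
\]
where the last identification is precisely the definition of $\mu_2$ in \eqref{muu}. Applying the Poisson difference bound $\|\Delta h_f\| \le 1/\max(1,\mu)$ from \eqref{bound-Pox} and taking the supremum over $f \in \mathcal{H}$ then delivers $d_{TV}(W_n,X) \le |\mu_2|/\max(1,\mu)$.

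The step I expect to demand the most care is the remainder estimate: it relies on extracting $\|\Delta h_f\|$ from $U_1 h_f$, which is only possible because Proposition \ref{st-w1} presents $\mathscr{A}_{W_n}$ in differenced form, and on the telescoping that turns $\sum_{l=1}^k g_{i,k+1}$ into the second factorial cumulant moment $\mu_2$. One subtlety to state honestly concerns the sign of the coefficients $g_{i,k+1}$: when they are nonnegative (as for the Poisson and geometric summands that motivate the paper) the triangle inequality yields exactly $\|\Delta h_f\|\mu_2$ with $\mu_2 \ge 0$, matching the stated $|\mu_2|$; for signed coefficients the estimate more properly produces $\sum_{i,k} k\,|g_{i,k+1}|$, and the bound $|\mu_2|/\max(1,\mu)$ should be read under the convention that this magnitude is what is being controlled.
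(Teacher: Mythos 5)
Your proposal is correct and follows essentially the same route as the paper: the paper likewise writes $\mathscr{A}_{W_n}=\mathscr{A}_X+\mathscr{U}_{W_n}$ via Proposition \ref{st-w1}, takes the expectation of the perturbation term at $W_n$, and applies \eqref{bound-Pox} directly rather than invoking Lemma \ref{perturbation}. Your closing caveat about signed coefficients $g_{i,k+1}$ (where the triangle inequality really yields $\sum_{i,k}k\,|g_{i,k+1}|$ rather than $|\mu_2|$) is a fair point that the paper's terse proof glosses over.
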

\begin{proof}
From Proposition \ref{st-w1}, a Stein operator for $W_n$ is given by
\begin{align*}
\mathscr{A}_{W_n}h(j)&={\mu}h(j+1)-jh(j)+\sum_{i=1}^{n}\sum_{k=0}^{\infty}\sum_{l=1}^{k} g_{i,k+1} \Delta h(j+l)=\mathscr{A}_{X}h(j)+\mathscr{U}_{W_n}h(j),
\end{align*}
where $\mathscr{A}_X$ is a Stein operator for $X$ as discussed in \eqref{st-Pox}. Observe that $\mathscr{A}_{W_n}$ is a Stein operator for $W_n$ which can be seen as a perturbation of Poisson operator. Now, for $h \in H_X \cap H_{W_n}$, taking expectation of perturbed operator $\mathscr{U}_{W_n}$ with respect to $W_n$ and using \eqref{bound-Pox}, the result follows.
\end{proof}
\noindent
Next, we derive $Z=X+Y$ approximation to $W_n$, where $X\sim\text{Po}(\lambda)$ and $Y\sim\text{Ge}(p)$, by matching first two moments, that is, $ \mathbb{E}(Z)= \mathbb{E}(W_n)$ and $\mathrm{Var}(Z)=\mathrm{Var}(W_n)$ which give the following choice of parameters
\begin{align}
\lambda=\mu-\sqrt{\sigma^2-\mu}\quad \text{and}\quad p=\frac{1}{1+\sqrt{\sigma^2-\mu}}.\label{pp:1}
\end{align}
\begin{theorem} \label{bound-P1}
Let $\xi_1,\xi_2,\dotsc,\xi_n$ be independent random variables satisfying \eqref{pgf-xi} and the mean and variance of $W_n=\sum_{i=1}^{n}\xi_i$ satisfying  \eqref{pp:1}. Also, assume that $\sigma^2>\mu$ and $\lambda >2(q/p)^2$. Then
\begin{align*}
d_{TV}(W_n,Z) \leq  \frac{\lambda\sqrt{\frac{2}{\pi}} \Big| \mu_{3}-2 \left(q/p\right)^3 \Big|\left(\frac{1}{4}+\sum_{i=1}^{n} \big(1-d_{TV}(\xi_{i},\xi_{i}+1)\big)\right)^{-1/2}}{{ \Big( \lambda} -2(q/p)^2 \Big)\max(1,\lambda)},
\end{align*}
where $Z=X+Y$, $X\sim \text{Po}(\lambda)$ and $Y\sim \text{Ge}(p)$.
\end{theorem}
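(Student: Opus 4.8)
The plan is to invoke the perturbation lemma (Lemma \ref{perturbation}) with base random variable $X_1 = X \sim \text{Po}(\lambda)$, target $X_2 = Z = X+Y$, and comparison variable $X_3 = W_n$, so that $d_{TV}(X_2,X_3)$ is exactly $d_{TV}(W_n,Z)$. By Propositions \ref{st-w1} and \ref{st-z}, formula \eqref{grad} rewrites both $\mathscr{A}_{W_n}$ and $\bar{\mathscr{A}}_{X+Y}$ as the Poisson operator perturbed by a forward-difference term, and the mean matching in \eqref{pp:1} (which gives $\lambda + q/p = \mu$) aligns their Poisson parts. The Poisson Stein factor \eqref{bound-Pox1} supplies $w_1 = 2$ and $\alpha = \lambda$, while the geometric perturbation, whose coefficients are $q^{k+1}$, furnishes $w_2 = \sum_{k\ge 0} k\,q^{k+1} = (q/p)^2$. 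Hence $w_1 w_2 = 2(q/p)^2$, the hypothesis $\lambda > 2(q/p)^2$ is precisely $w_1 w_2 < \alpha$, and it produces the amplification factor $\alpha/(\alpha - w_1 w_2) = \lambda/(\lambda - 2(q/p)^2)$. Since $X$, $Y$ and each $\xi_i$ live on $\mathbb{Z}_+$, all supports are full, so the boundary terms $P(X_2 \in \mathcal{S}^c)$ and $P(X_3 \in \mathcal{S}^c)$ vanish.

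The crux is identifying $\varepsilon$ in the estimate $|\mathbb{E}\bar{\mathscr{A}}_{X+Y}h(W_n)| \le \varepsilon\|\Delta h\|$. Since $\mathbb{E}\mathscr{A}_{W_n}h(W_n)=0$, I would write, with $a_k := \sum_{i=1}^n g_{i,k+1}$ and $b_k := q^{k+1}$,
\[
\mathbb{E}\bar{\mathscr{A}}_{X+Y}h(W_n) = \mathbb{E}\big[(\bar{\mathscr{A}}_{X+Y} - \mathscr{A}_{W_n})h(W_n)\big] = \sum_{k\ge 0}(b_k - a_k)\sum_{l=1}^k \mathbb{E}\Delta h(W_n + l),
\]
which is legitimate because the two operators share the same Poisson($\mu$) part. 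The variance matching is decisive: \eqref{pp:1} forces $(q/p)^2 = \sigma^2 - \mu = \mu_2$, so $\sum_k k(b_k - a_k) = (q/p)^2 - \mu_2 = 0$, and the leading contribution, proportional to $\mathbb{E}\Delta h(W_n+1)\sum_k k(b_k-a_k)$, cancels.

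To extract the surviving second-order remainder I would replace $\mathbb{E}\Delta h(W_n+l)$ by $\mathbb{E}\Delta h(W_n+1)$ plus a telescoping sum of second differences $\mathbb{E}\Delta^2 h(W_n+m)$, each bounded by $|\mathbb{E}\Delta^2 h(W_n+m)| \le 2\|\Delta h\|\,d_{TV}(W_n,W_n+1)$, since two expectations of the bounded function $\Delta h$ under the laws of $W_n+m+1$ and $W_n+m$ differ by at most $2\|\Delta h\|\,d_{TV}(W_n+1,W_n)$. After the first-order cancellation the surviving coefficient is the second factorial-type moment of the perturbation, $\sum_k k(k-1)(b_k - a_k) = 2(q/p)^3 - \mu_3$, giving the factor $|\mu_3 - 2(q/p)^3|$. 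Bounding $d_{TV}(W_n,W_n+1)$ by the Mattner--Roos inequality \eqref{dist-w,w+1} then yields $\varepsilon = \sqrt{2/\pi}\,|\mu_3 - 2(q/p)^3|\big(\tfrac14 + \sum_{i=1}^n(1 - d_{TV}(\xi_i,\xi_i+1))\big)^{-1/2}$.

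Substituting $w_1 = 2$, $w_2 = (q/p)^2$, $\alpha = \lambda$, this $\varepsilon$, and the vanishing boundary terms into Lemma \ref{perturbation}, the factor $\tfrac{\alpha}{2(\alpha - w_1 w_2)}\,\varepsilon w_1 \min(1,\alpha^{-1})$ collapses to $\tfrac{\lambda \varepsilon}{(\lambda - 2(q/p)^2)\max(1,\lambda)}$, which is exactly the claimed bound. The main obstacle is the second step: organizing the second-order expansion so that the surviving coefficient is genuinely $|\mu_3 - 2(q/p)^3|$, rather than the cruder $\sum_k k(k-1)|b_k - a_k|$ that a naive term-by-term estimate produces, while simultaneously controlling every second difference uniformly through $d_{TV}(W_n,W_n+1)$. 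This is precisely where the two-moment matching \eqref{pp:1} must be exploited carefully to kill the first-order term cleanly before passing to the second difference.
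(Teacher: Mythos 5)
Your proposal follows the paper's proof essentially step for step: the same rewriting of the $Z=X+Y$ and $W_n$ operators about a common Poisson part via \eqref{grad}, the same constants $w_1=2$, $w_2=\sum_{k}kq^{k+1}=(q/p)^2$, $\alpha=\lambda$, the same first-order cancellation from the variance matching followed by a second-difference expansion controlled by $d_{TV}(W_n,W_n+1)$ and \eqref{dist-w,w+1}, and the same final assembly through Lemma \ref{perturbation}. The one point you single out as the ``main obstacle'' --- justifying the signed coefficient $\big|\mu_3-2(q/p)^3\big|$ rather than the cruder term-by-term sum of absolute values --- is treated no more carefully in the paper itself, which simply moves the absolute value outside the double sum without comment.
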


\begin{remark}
Note that, in Theorem \ref{bound-P1}, the choice of parameters are valid as 
$$\mu=\lambda+\frac{q}{p} >\frac{q}{p}=\sqrt{\sigma^2-\mu}\quad \text{and}\quad p=\frac{1}{1+\sqrt{\sigma^2-\mu}}\le 1,$$
since $\sigma^2>\mu$. 
\end{remark}

\begin{proof}[Proof of Theorem \ref{bound-P1}]
From \eqref{st-w}, the Stein operator for $W_n$ is given by
$$\mathscr{A}_{W_n}h(j)=\sum_{i=1}^{n}\sum_{k=0}^{\infty}g_{i,k+1}h(j+k+1)-jh(j).$$
Using \eqref{grad}, with $\sum_{i=1}^{n} \sum_{k=0}^{\infty} g_{i,k+1}= \mathbb{E}(W_n)= \mathbb{E}(Z)= \lambda + q/p$, we get
\begin{align*}
\mathscr{A}_{W_n}h(j)&=\Big(\lambda+\frac{q}{p}\Big)h(j+1)-jh(j)+\sum_{k=0}^{\infty}\sum_{l=1}^{k}q^{k+1}\Delta h(j+l)\\
&~~~+\sum_{i=1}^{n}\sum_{k=0}^{\infty}\sum_{l=1}^{k}g_{i,k+1}\Delta h(j+l)-\sum_{k=0}^{\infty}\sum_{l=1}^{k}q^{k+1}\Delta h(j+l)\\
&=\mathscr{A}_{Z}h(j)+\bar{\mathscr U}_{W_n}h(j).
\end{align*}
This is a Stein operator for $W_n$ which can be seen as perturbation of $Z=X+Y$ operator, obtained in Proposition \ref{st-z}.
Now, consider
\begin{equation}
\bar{\mathscr U}_{W_n}h(j)= \sum_{i=1}^{n}\sum_{k=0}^{\infty}\sum_{l=1}^{k}g_{i,k+1}\Delta h(j+l)-\sum_{k=0}^{\infty}\sum_{l=1}^{k}q^{k+1}\Delta h(j+l). \label{u}
\end{equation}
We know that
$$\Delta h(j+l)=\sum_{m=1}^{l-1}{\Delta}^{2} h(j+m)+\Delta h(j+1).$$
Substituting in \eqref{u} and using $\mathrm{Var}(Z)=\mathrm{Var}(W_n)$ with $\sum_{i=1}^{n} \sum_{k=0}^{\infty} g_{i,k+1}= \mathbb{E}(W_n)= \mathbb{E}(Z)= \lambda + q/p$, we have
\begin{align*}
\bar{\mathscr U}_{W_n}h(j)=\sum_{i=1}^{n}\sum_{k=0}^{\infty}\sum_{l=1}^{k}\sum_{m=1}^{l-1}g_{i,k+1}{\Delta}^{2} h(j+m)-\sum_{k=0}^{\infty}\sum_{l=1}^{k}\sum_{m=1}^{l-1}q^{k+1}{\Delta}^{2} h(j+m).
\end{align*}
Now, taking expectation with respect to $W_n$, we get
\begin{align*}
\mathbb{E}\big[\bar{\mathscr{U}}_{W_n}h(W_n)\big]=&\sum_{j=0}^{\infty}\Big[ \sum_{i=1}^{n}\sum_{k=0}^{\infty}\sum_{l=1}^{k}\sum_{m=1}^{l-1} g_{i,k+1}{\Delta}^2 h(j+m)\\
&-\sum_{k=0}^{\infty}\sum_{l=1}^{k}\sum_{m=1}^{l-1} q^{k+1}{\Delta}^2h(j+m)\Big]P[W_n=j].
\end{align*}
Therefore,
\begin{align*}
\left|\mathbb{E}\big[\bar{\mathscr{U}}_{W_n}h(W_n)\big]\right|&\leq \ 2d_{TV}(W_n,W_n+1)\|\Delta h\| \Bigg|\sum_{i=1}^{n}\sum_{k=0}^{\infty} \frac{k(k-1)}{2} g_{i,k+1}-\sum_{k=0}^{\infty} \frac{k(k-1)}{2}q^{k+1}\Bigg|.\\
&\leq \ d_{TV}(W_n,W_n+1)\|\Delta h\| \Bigg| \mu_{3}-2 \frac{q^3}{p^3} \Bigg|.
\end{align*}
Using \eqref{dist-w,w+1}, we have
\begin{align}
\Big|\mathbb{E}\big[\mathscr{U}_{W_n}h(W_n)\big]\Big| \leq \|\Delta h\|\sqrt{\frac{2}{\pi}} \Bigg(\frac{1}{4}+\sum_{i=1}^{n} \big(1-d_{TV}(\xi_{i},\xi_{i}+1)\big)\Bigg)^{-1/2} \Bigg| \mu_{3}-2 \frac{q^3}{p^3} \Bigg|.\label{delta2}
\end{align}
From Proposition \ref{st-z}, we have
\begin{align}
\|{\cal U}_{X+Y}h\|\le \frac{q^2}{p^2}\|\Delta h\|.\label{hhh}
\end{align}
Using \eqref{bound-Pox1}, \eqref{delta2}, and \eqref{hhh} with Lemma \ref{perturbation}, the proof follows.
\end{proof}

\section{An Application to the Waiting Time Distribution of 2-runs}\label{tsec}
The concept of runs and patterns is well-known in the literature due to its applicability in many real-life applications such as reliability theory, machine maintenance, statistical testing, and quality control, among many others. In this section, we consider the set up discussed by Hirano (1984) and generalized by Huang and Tsai (1991) as follows:\\
Let $N$ denote the number of two consecutive successes in $n$ Bernoulli trials with success probability $p$.  Then, Huang and Tsai (1991) (with $k_1=0$ and $k_2=2$ in their notation) have shown that the waiting time for $n$th occurrence of 2-runs can be written as the sum of $n$ independent and identical distributed (iid) random variables, say $U_1,U_2,\dotsc,U_n$, concentrated on $\{2,3,\dotsc\}$.  Here $U_i$ is 2 plus the number of trials between the $(j - 1)$th and $j$th occurrence of 2-runs. The PGF of $U_i$ is given by
\begin{align*}
\psi_U(t)=\frac{p^2t^2}{1-t+p^2t^2},
\end{align*} 
where $U$ is the iid copy of $U_i$, $i=1,2,\dotsc,n$ (see Hung and Tsai (1991) for more details). Now, let $V_i=U_i-2$ concentrated on $\mathbb{Z}_+$. Then, Kumar and Upadhye (2017) have given the PGF of $V_i$ and which is given by
\begin{align*}
\psi_{V_i}(t)=\frac{p^2}{1-t+p^2t^2}=\sum_{j=0}^{\infty}\left(\sum_{\ell=0}^{\floor{j/2}}\binom{j-\ell}{\ell}(-1)^\ell p^{2 (\ell+1)}\right)t^j=\sum_{j=0}^{\infty}g_{i,j+1}t^j,
\end{align*} 
where $g_{i,j+1}=\sum_{\ell=0}^{\floor{j/2}}\binom{j-\ell}{\ell}(-1)^\ell p^{2 (\ell+1)}$, for each $i=1,2,\dotsc,n$. For more details, we refer the reader to Huang and Tsai (1991), Kumar and Upadhye (2017), and Balakrishnan and Koutras (2002), and references therein.\\
Now, let $W_{\bar{n}}=\sum_{i=1}^{\bar{n}}V_i$ then $W_{\bar{n}}$ denotes the number of failures before ${\bar{n}}^\text{th}$ occurrence of 2-runs. Therefore, from Theorem \ref{bound-po}, we have
\begin{align*}
d_{TV}(W_{\bar{n}},Po(\mu))\le \frac{|\mu_2|}{\max(1,\mu)},
\end{align*}
where $\mu=\bar{n}\sum_{j=0}^{\infty}g_{i,j+1}$ and $\mu_2=\bar{n}\sum_{j=0}^{\infty}jg_{i,j+1}$. In a similar manner, from Theorem \ref{bound-P1}, we can also obtain the bound for the Poisson convoluted geometric approximation. For more details, we refer the reader to Section $4$ of Kumar and Upadhye (2017).

\section{Concluding Remarks}\label{psec}
\begin{enumerate}
\item Note that, if $\xi_{i} \sim Po(\lambda_{i})$, $i=1,2,\dotsc,n$ then $ d_{TV}(W_n,X)=0$ in Theorem \ref{bound-po}, as expected.
\item If $\xi_{1} \sim Po(\lambda)$ and $\xi_{2}\sim Ge(p)$, for $i=1,2$, and $W_2=\xi_1+\xi_2$ then $d_{TV}(W_2,Z)=0$ in Theorem \ref{bound-P1}, as expected.
\item The bounds obtained in Theorems \ref{bound-po} and \ref{bound-P1} are either comparable to or improvement over the existing bounds available in the literature. In particular, some comparison can be seen as follows:
\begin{enumerate}
\item If $\xi_i\sim Ber(p_i)$, for $i=1,2,\dotsc,n$ then, from Theorem \ref{bound-po}, we have
\begin{align*}
d_{TV}(W_n,Po(\mu))\le \frac{1}{\max(1,\mu)}\sum_{i=1}^{n}p_i^2,
\end{align*}
where $\mu=\sum_{i=1}^{n}p_i$. The above bound is same as given by Barbour {\em et al.} (1992b) and is an improvement over the bound $d_{TV}(W_n,Po(\mu))\le \sum_{i=1}^{n}p_i^2$ given by Khintchine (1933) and Le Cam (1960).
\item If $\xi_i\sim Ge(p_i)$, $i=1,2,\dotsc,n$ then, from Theorem \ref{bound-po}, we have
$$d_{TV}(W_n,X)\le \frac{1}{\max(1,\mu)}\sum_{i=1}^{n} \left( \frac{q_i}{p_i}\right)^2.$$
This bound is an improvement over negative binomial approximation given by Kumar and Upadhye (2017) in Corollary 3.1. 
\item If $\xi_i\sim NB(\alpha_i,p_i)$, $i=1,2,\dotsc,n$ then, from Theorems \ref{bound-po}, we have
\begin{align}
d_{TV}(W_n,Po(\mu))\le \frac{1}{\max(1,\mu)}\sum_{i=1}^{n}\alpha_i \left( \frac{q_i}{p_i}\right)^2,\label{vv}
\end{align}
where $\mu=\sum_{i=1}^{n}\frac{\alpha_i q_i}{p_i}$. Vellaisamy and Upadhye (2009) obtained bound for $S_n=\sum_{i=1}^n \xi_i$ and is given by
\begin{equation}
d_{TV}(S_n,Po(\lambda))\le { \min \left(1, \frac{1}{\sqrt{2 \lambda e}} \right)} \sum_{i=1}^{n} \frac{\alpha_i q_i^2}{p_i}, \label{v1}
\end{equation}
where $ \lambda = \sum_{i=1}^{n} \alpha_i q_i = \alpha q$. Under identical set up with $\alpha=5$ and various values of $n$ and $q$, the numerical comparison of \eqref{vv} and \eqref{v1} as follows: 

\begin{table}[H]
  \centering
  \caption{Comparison of bounds.}
\begin{tabular}{lccccccccccc}
 \toprule
$n$ & $q$ & From \eqref{vv} & From \eqref{v1} \\
\midrule
\vspace{0.05cm}
10 & \multirow{3}{*}{0.1} &0.1111& 0.3370 \\
\vspace{0.05cm}
30 &  & 0.1111 & 1.0109 \\
\vspace{0.05cm}
50 &  & 0.1111& 1.6848 \\
\midrule
\vspace{0.05cm}
10& \multirow{3}{*}{0.2} &0.2500 & 1.0722 \\
\vspace{0.05cm}
30&  &0.2500  & 3.2166 \\
\vspace{0.05cm}
50&  &  0.2500& 5.3610 \\
\bottomrule
\end{tabular}
\end{table}

\noindent
Note that our bound (from \eqref{vv}) is better than the bound given in \eqref{v1}. In particular, graphically, the closeness of these two distributions can be seen as follows:

\begin{figure}[H]
\centering
\begin{minipage}{.4\textwidth}
  \centering
  \includegraphics[width=0.9 \linewidth]{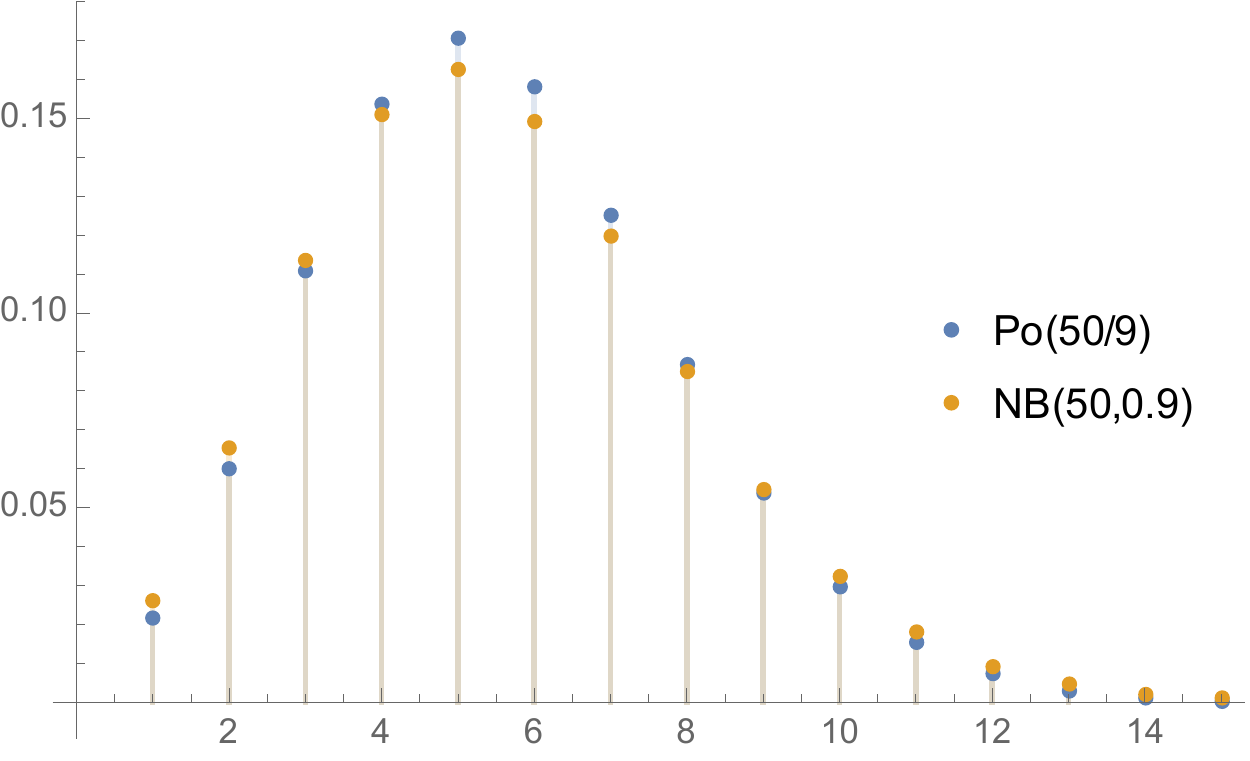}
  \captionof{figure}{$n=10, q=0.1$}
\end{minipage}%
\begin{minipage}{.4\textwidth}
  \centering
  \includegraphics[width=0.9 \linewidth]{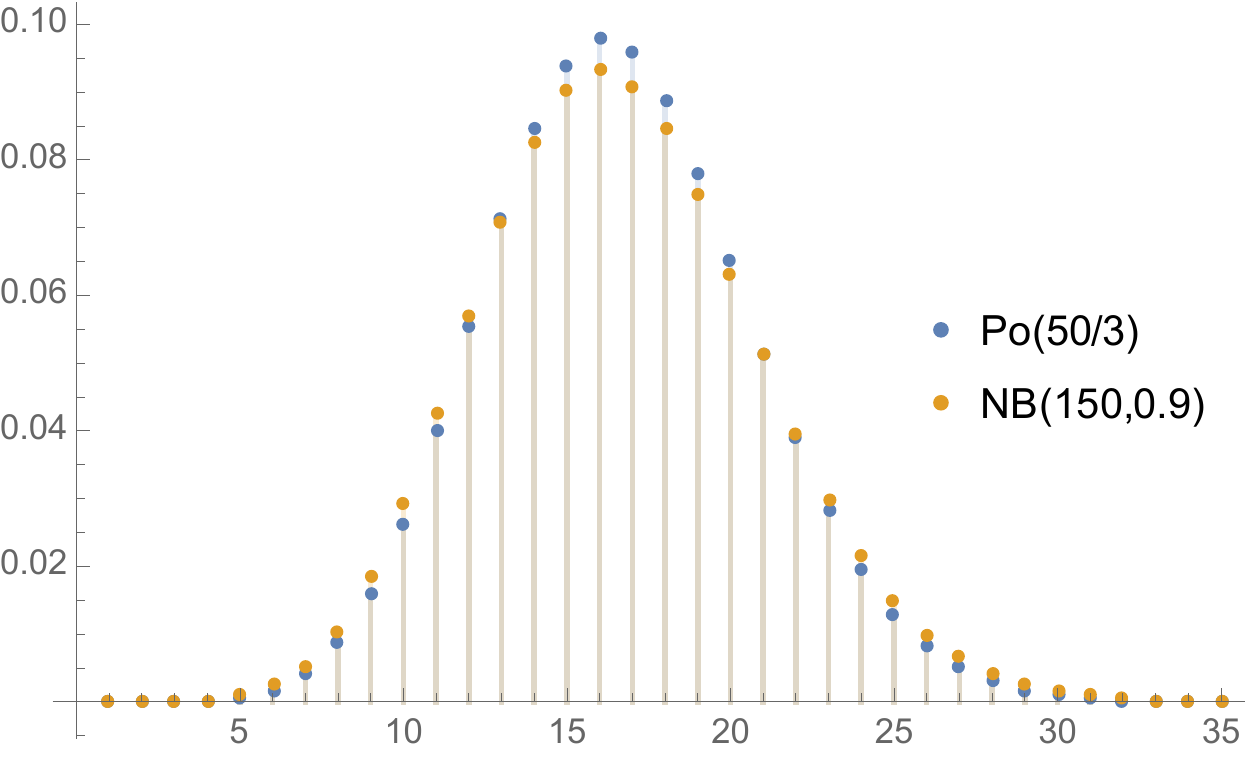}
  \captionof{figure}{$n=30, q=0.1$}
\end{minipage}
\end{figure}

\begin{figure}[H]
\centering
\begin{minipage}{.4\textwidth}
  \centering
  \includegraphics[width=0.9\linewidth]{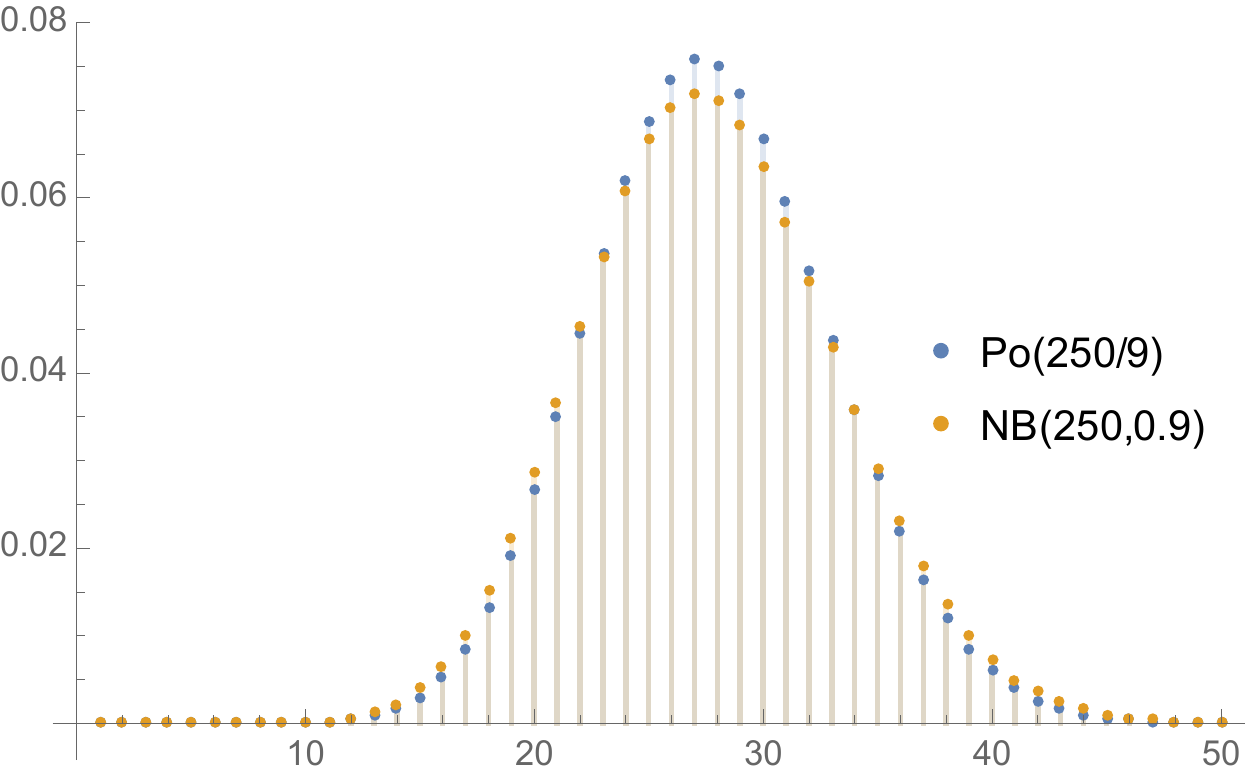}
  \captionof{figure}{$n=50, q=0.1$}
\end{minipage}%
\begin{minipage}{.4\textwidth}
  \centering
  \includegraphics[width=0.9\linewidth]{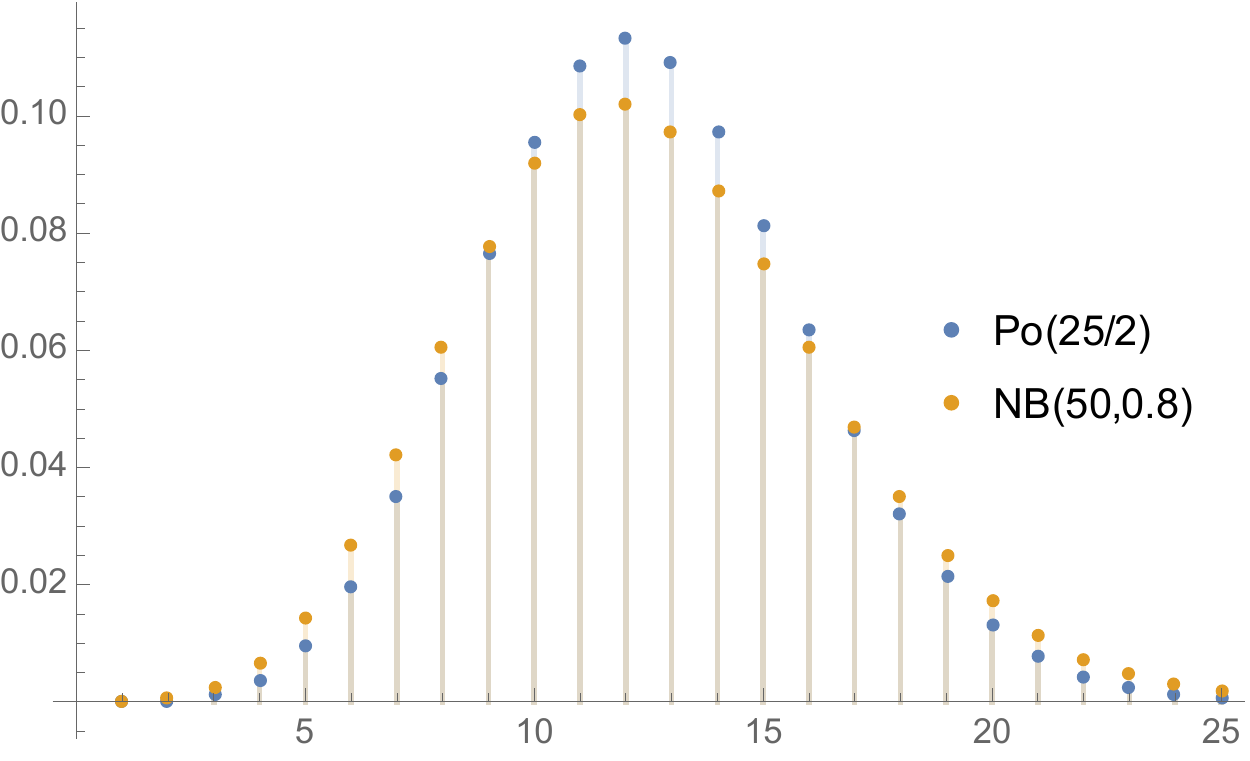}
  \captionof{figure}{$n=10, q=0.2$}
\end{minipage}
\end{figure}

\begin{figure}[H]
\centering
\begin{minipage}{.4\textwidth}
  \centering
  \includegraphics[width=0.9\linewidth]{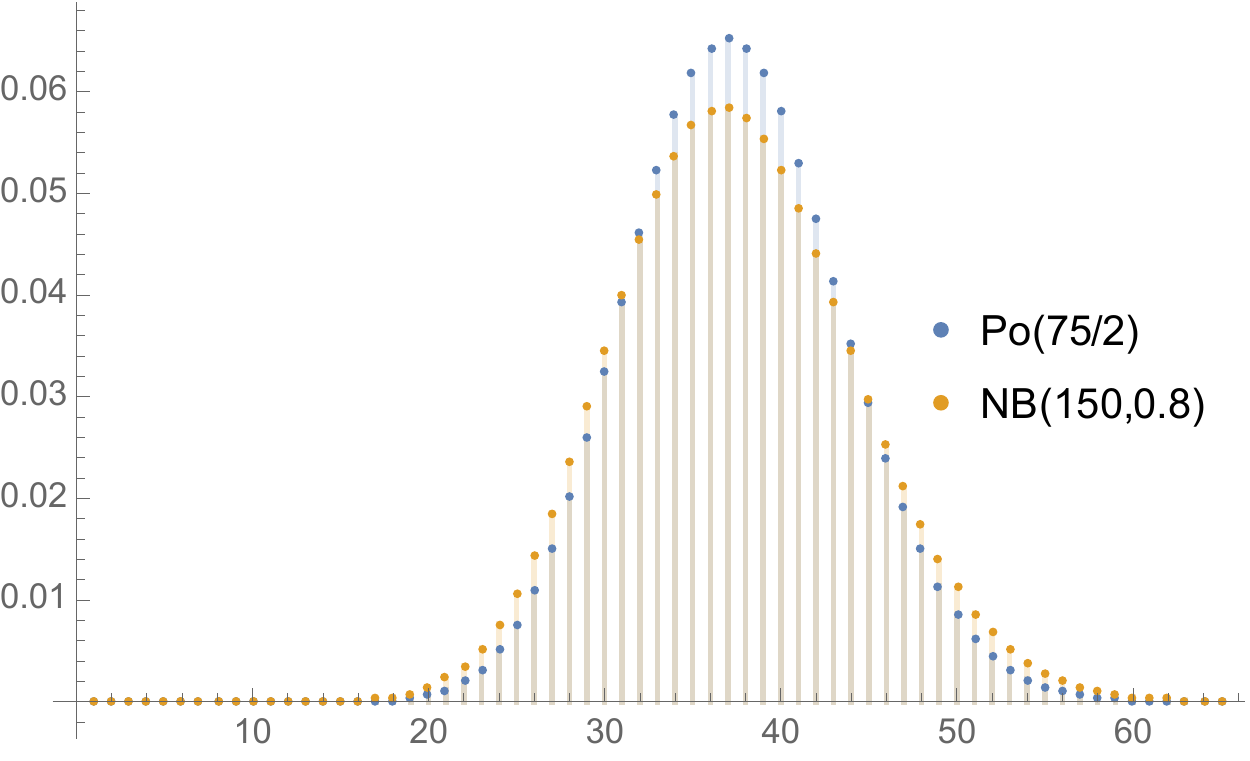}
  \captionof{figure}{$n=30, q=0.2$}
\end{minipage}%
\begin{minipage}{.4\textwidth}
  \centering
  \includegraphics[width=0.9\linewidth]{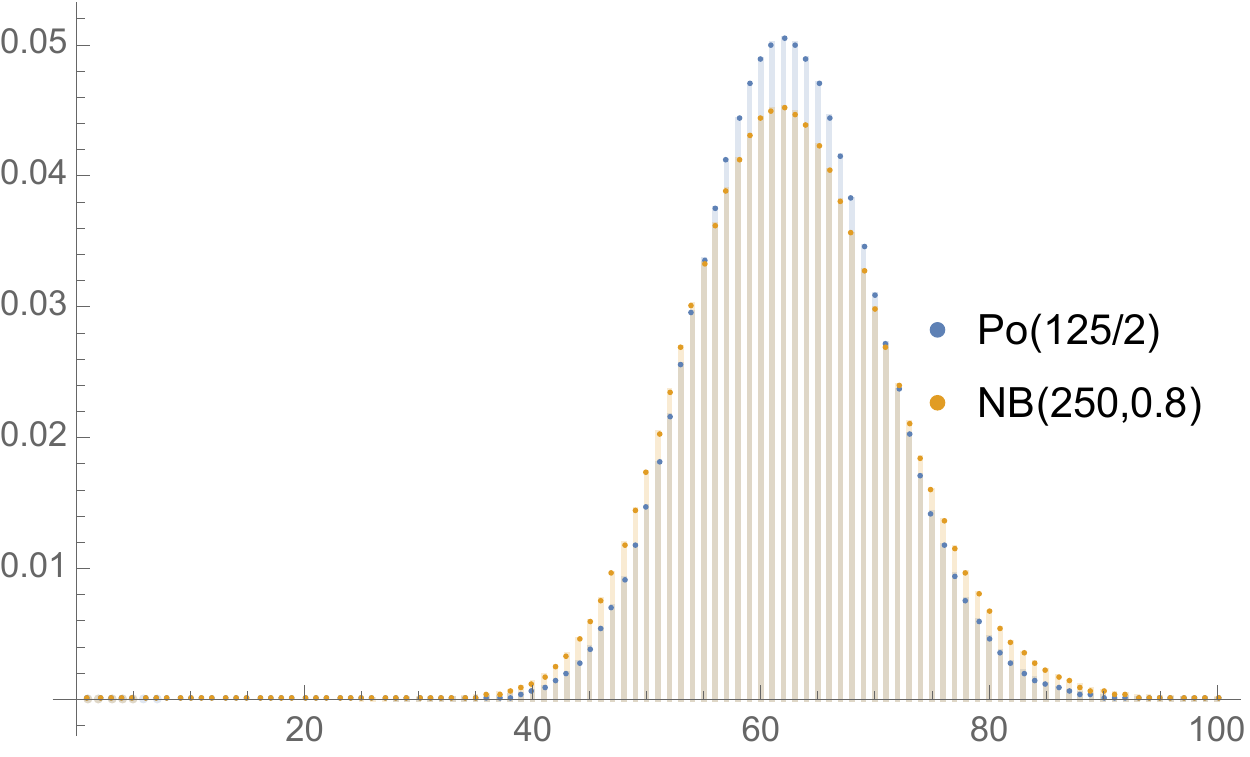}
  \captionof{figure}{$n=50, q=0.2$}
\end{minipage}
\end{figure}

\noindent
The above graphs are obtained by using the moment matching conditions. Also, from the numerical table and graphs, observe that the distributions are closer for sufficiently small values of $q$ and large values of $n$, as expected. 
\item From Theorem 1 of Hung and Giang (2016), it is given that, for $A \subset {\mathbb Z_+}$,
\begin{align}
\sup_{A}& \left|P(W_n \in A)- \sum_{k \in A} \frac{\lambda_n^k e^{-\lambda_n}}{k!} \right| \nonumber \\
&\le \sum_{i=1}^n \min \left\{ \lambda_n^{-1} (1-e^{-\lambda_n}) r_{n,i}(1-p_{n,i}), 1-p_{n,i} \right\} (1-p_{n,i})p_{n,i}^{-1}, \label{h}
\end{align}
where $W_n = \sum_{i=1}^n X_{n,i}, ~X_{n,i} \sim NB(r_{n,i},p_{n,i}) $ with $\lambda_n = {\mathbb E}(W_n)$. Note that if $\min \left\{ \lambda_n^{-1} (1-e^{-\lambda_n}) r_{n,i}(1-p_{n,i}), 1-p_{n,i}\right\}=1-p_{n,i}$, for all $i=1,2,\ldots,n$, then
\begin{align}
\sup_{A}& \left|P(W_n \in A)- \sum_{k \in A} \frac{\lambda_n^k e^{-\lambda_n}}{k!} \right| \le \sum_{i=1}^n (1-p_{n,i})^2 p_{n,i}^{-1}, \label{h1}
\end{align}
which is of order $O(n)$. Clearly, for large values of $n$, Theorem \ref{bound-po} is an improvement over \eqref{h1}.
\end{enumerate}
\end{enumerate}

\singlespacing
\small

\end{document}